\numberwithin{equation}{section}
\newcommand{\version}{version 2.0,\ \   September 15, 2012}
\def\eqref#1{(\ref{#1})}
\newcommand{\arrow}{{\:\longrightarrow\:}}
\newcommand{\Z}{{\Bbb Z}}
\def\C{{\Bbb C}}
\newcommand{\Q}{{\Bbb Q}}
\def\1{\sqrt{-1}\:}
\newcommand{\restrict}[1]{{\left|_{{\phantom{|}\!\!}_{#1}}\right.}}
\newcommand{\cntrct}                % contraction with a vector field
{\hspace{2pt}\raisebox{1pt}{\text{$\lrcorner$}}\hspace{2pt}}
\newcommand{\calo}{{\cal O}}
\renewcommand{\bar}{\overline}
\renewcommand{\phi}{\varphi}
\renewcommand{\epsilon}{\varepsilon}
\renewcommand{\geq}{\geqslant}
\renewcommand{\leq}{\leqslant}
\newcommand{\Teich}{\operatorname{Teich}}
\newcommand{\Hilb}{\operatorname{Hilb}}
\newcommand{\Pic}{\operatorname{Pic}}
\newcommand{\Diff}{\operatorname{Diff}}
\newcommand{\rk}{\operatorname{rk}}
\newcommand{\Comp}{\operatorname{Comp}}
\newcommand{\Mod}{\operatorname{Mod}}
\newcommand{\Per}{\operatorname{\sf Per}}
\newcommand{\Perspace}{\operatorname{{\mathbb P}\sf er}}
\newcounter{Mycounter}[section]
\newcounter{lemma}[section]
\newcounter{claim}[section]
\renewcommand{\theclaim}{{Claim \thesection.\arabic{claim}}}
\newcommand{\claim}{%
    \setcounter{claim}{\value{Mycounter}}
    \refstepcounter{claim}
    \stepcounter{Mycounter}
    {\noindent \bf \theclaim:\ }}
\newcounter{sublemma}[section]
\newcounter{corollary}[section]
\renewcommand{\thecorollary}{{Corollary \thesection.\arabic{corollary}}}
\newcommand{\corollary}{%
    \setcounter{corollary}{\value{Mycounter}}
    \refstepcounter{corollary}
    \stepcounter{Mycounter}
    {\noindent \bf \thecorollary:\ }}
\newcounter{theorem}[section]
\renewcommand{\thetheorem}{{Theorem \thesection.\arabic{theorem}}}
\newcommand{\theorem}{%
    \setcounter{theorem}{\value{Mycounter}}
    \refstepcounter{theorem}
    \stepcounter{Mycounter}
    {\noindent \bf \thetheorem:\ }}
\newcounter{conjecture}[section]
\renewcommand{\theconjecture}{{Conjecture \thesection.\arabic{conjecture}}}
\newcommand{\conjecture}{%
    \setcounter{conjecture}{\value{Mycounter}}
    \refstepcounter{conjecture}
    \stepcounter{Mycounter}
    {\noindent \bf \theconjecture:\ }}
\newcounter{proposition}[section]
\renewcommand{\theproposition}
      {{Proposition \thesection.\arabic{proposition}}}
\newcommand{\proposition}{%
    \setcounter{proposition}{\value{Mycounter}}
    \refstepcounter{proposition}
    \stepcounter{Mycounter}
    {\noindent \bf \theproposition:\ }}
\newcounter{definition}[section]
\renewcommand{\thedefinition}
      {{Definition~\thesection.\arabic{definition}}}
\newcommand{\definition}{%
    \setcounter{definition}{\value{Mycounter}}
    \refstepcounter{definition}
    \stepcounter{Mycounter}
    {\noindent \bf \thedefinition:\ }}
\newcounter{example}[section]
\newcounter{remark}[section]
\renewcommand{\theremark}{{Remark \thesection.\arabic{remark}}}
\newcommand{\remark}{%
    \setcounter{remark}{\value{Mycounter}}
    \refstepcounter{remark}
    \stepcounter{Mycounter}
    {\noindent \bf \theremark:\ }}
\newcounter{problem}[section]
\newcounter{question}[section]
\def\blacksquare{\hbox{\vrule width 5pt height 5pt depth 0pt}}
\def\endproof{\blacksquare}
\def\Gr{\mathop{\text{\rm Gr}}}
\def\O{\mathop{\text{\rm O}}}
\begin{document}
%%%%%%%%%%%%%%%%%%%%%%%%%%%%%%%%%%%%%%%%%%%%%%%%%%%%%%%%%%%%
\begin{center}
{\LARGE\bf
Families of Lagrangian fibrations on hyperk\"ahler manifolds\\[4mm]
}
%%%%%%%%%%%%%%%%%%%%%%%%%%%%%%%%%%%%%%%%%%%%%%%%%%%%%%%%%%%%

Ljudmila Kamenova, Misha
Verbitsky\footnote{Partially supported by 
by RFBR grants 12-01-00944-Á,  10-01-93113-NCNIL-a, and
AG Laboratory NRI-HSE, RF government grant, ag. 11.G34.31.0023.}

\end{center}

%%%%%%%%%%%%%%%%%%%%%%%%%%%%%%%%%%%%%%%%%%%%%%%%
{\small \hspace{0.1\linewidth}
\begin{minipage}[t]{0.8\linewidth}
{\bf Abstract} \\
A holomorphic Lagrangian fibration on a 
holomorphically symplectic manifold is a  
holomorphic map with Lagrangian fibers.
It is known (due to Huybrechts) that
a given compact manifold admits only
finitely many holomorphic symplectic
structures, up to deformation. We prove that 
a given compact manifold with $b_2 \geq 7$ admits only 
finitely many deformation types of 
holomorphic Lagrangian fibrations.
We also prove that all known hyperk\"ahler
manifolds are never Kobayashi hyperbolic.
\end{minipage}
}
%%%%%%%%%%%%%%%%%%%%%%%%%%%%%%%%%%%%%%%%%%%%%%%%

{\scriptsize
\tableofcontents
}

%%%%%%%%%%%%%%%%%%%%%%%%%%%%%%%%%%%%%%%%%%%%%%%%

\section{Introduction}

Irreducible compact hyperk\"ahler manifolds, or irreducible 
holomorphic symplectic manifolds, are a natural generalization of 
K3 surfaces in higher dimensions. The geometry of K3 surfaces is well 
studied. In particular, it is known that any two K3 surfaces are 
deformation equivalent to each other, i.e., there is only one 
deformation type of K3 surfaces. 

A natural question to ask is whether the same is true in higher dimensions. 
The answer is negative due to Beauville's examples. In every possible 
complex dimension $2n$ there are at least the Hilbert scheme of $n$ points on 
a K3 surface $S$, $Hilb^n(S)$, and the generalized Kummer varieties 
$K^{n+1}(A)$, where $A$ is an Abelian surface. These two examples are not 
deformation equivalent since they have different Betti numbers. There are 
two more exceptional examples due to K. O'Grady in dimensions $6$ and $10$. 

It is conjectured that in every fixed dimension there are finitely many 
deformation types of irreducible compact hyperk\"ahler manifolds. 
It is also conjectured that every hyperk\"ahler manifold can be deformed 
to one that admits a holomorphic Lagrangian fibration. It would
be interesting to classify the deformation types of tha pairs
$(M,L)$ of a hyperk\"ahler manifold together with a Lagrangian
fibration on it. In the present paper, we show that the
number of deformational classes of such pairs is finite,
if one fixes the smooth manifold undelying $M$.

In 
\cite{_Huybrechts:finiteness_} Huybrechts proved that for a fixed compact 
manifold there are at most finitely many deformation types of 
hyperk\"ahler structures on it. Therefore,
to prove that the number of deformation classes of pairs
$(M,L)$ is finite, it would suffice to prove it when a 
deformational class of $M$ is fixed.

Let $M\stackrel \pi \arrow X$ be a Lagrangian fibration.
Then $X$ is known to be projective, with $H^2(X)=\C$,
hence $\rk \Pic(X)=1$. Therefore, the primitive 
ample bundle $L_X$ on $X$ is unique (up to torsion).
Denote by $L_M$ the semiample bundle $\pi^*(L_X)$ on
$M$. Clearly, $c_1(L_M)^{\rk M}=0$; a (1,1)-class
satisfying this equation is called {\bf parabolic}.
The Lagrangian fibration $M\stackrel \pi \arrow X$ 
is uniquely determined by a class $[c_1(L_M)]\in \Pic(M)$
which is parabolic and semiample (this is due to D. Matsushita,  
\cite{_Matsushita:fibred_};
see \cite{_Sawon_}  for a detailed exposition of an early work on
Lagrangian fibrations).
Therefore, to classify the Lagrangian fibrations
it would suffice to classify pairs $(M,L_M)$,
where $L_M$ is a parabolic semiample line bundle.

We prove that in the Teichm\"uller space of hyperk\"ahler manifolds with 
a fixed parabolic class the pairs admitting
a Lagrangian fibration form a dense and open subset. The other 
main result is that the action of the monodromy group has finitely many 
orbits. As a corollary of these results we obtain that for a fixed compact 
manifold, there are only finitely many deformation types of hyperk\"ahler
structures equipped with a Lagrangian fibration.

%%%%%%%%%%%%%%%%%%%%%%%%%%%%%%%%%%%%%%%%%%%%%%%%

%%%%%%%%%%%%%%%%%%%%%%%%%%%%%%%%%%%%%%%%%%%%%%%%
\subsection{Hyperk\"ahler manifolds}
%%%%%%%%%%%%%%%%%%%%%%%%%%%%%%%%%%%%%%%%%%%%%%%%

\definition  A {\bf hyperk\"ahler manifold}
is a compact, K\"ahler, holomorphically symplectic manifold.

\hfill

\definition
 A hyperk\"ahler manifold $M$ is called
{\bf simple} if $H^1(M)=0$, $H^{2,0}(M)=\C$.

\hfill

\theorem
(Bogomolov's Decomposition Theorem,
\cite{_Bogomolov:decompo_}, \cite{_Besse:Einst_Manifo_}). 
Any hyperk\"ahler manifold admits a finite covering,
which is a product of a torus and several 
simple hyperk\"ahler manifolds. 
\endproof

\hfill

\remark
Further on, all hyperk\"ahler manifolds
are assumed to be simple.

\hfill

{\bf A note on terminology.}
Speaking of hyperk\"ahler manifolds, people
usually mean one of two different notions.
One either speaks of holomorphically
symplectic K\"ahler manifold, or of 
a manifold with a {\em hyperk\"ahler structure},
that is, a triple of complex structures
satisfying quaternionic relations and
parallel with respect to the Levi-Civita
connection. The equivalence
(in compact case) between these two 
notions is provided by the Yau's solution
of Calabi's conjecture 
(\cite{_Besse:Einst_Manifo_}). Throughout this paper,
we use the complex algebraic geometry
point of view, where ``hyperk\"ahler''
is synonymous with ``K\"ahler 
holomorphically symplectic'', 
in lieu of the differential-geometric
approach. The reader may
check \cite{_Besse:Einst_Manifo_} for an introduction
to hyperk\"ahler geometry from the
differential-geometric point of view.

Notice also that we included compactness in our definition
of a hyperk\"ahler manifold. In the differential-geometric 
setting, one does not usually assume that the
manifold is compact.

%%%%%%%%%%%%%%%%%%%%%%%%%%%%%%%%%%%%%%%%%%%%%%%%%%%%%%%%%%%%
\subsection{The Bogomolov-Beauville-Fujiki form}
%%%%%%%%%%%%%%%%%%%%%%%%%%%%%%%%%%%%%%%%%%%%%%%%%%%%%%%%%%%%

\theorem
(\cite{_Fujiki:HK_})
Let $\eta\in H^2(M)$, and $\dim M=2n$, where $M$ is
hyperk\"ahler. Then $\int_M \eta^{2n}= c q(\eta,\eta)^n$,
for some integer quadratic form $q$ on $H^2(M)$ and a constant $c>0$.
\endproof

\hfill

\definition
This form is called
{\bf  Bogomolov-Beauville-Fujiki form}.  It is defined
by this relation uniquely, up to a sign. The sign is determined
{}from the following formula (Bogomolov, Beauville;
\cite{_Beauville_}, \cite{_Huybrechts:lec_}, 23.5)
\begin{align*}   \lambda q(\eta,\eta) &=
   (n/2)\int_X \eta\wedge\eta  \wedge \Omega^{n-1}
   \wedge \bar \Omega^{n-1} +\\
 &+(1-n)\frac{\left(\int_X \eta \wedge \Omega^{n-1}\wedge \bar
   \Omega^{n}\right) \left(\int_X \eta \wedge
   \Omega^{n}\wedge \bar \Omega^{n-1}\right)}{\int_M \Omega^{n}
   \wedge \bar \Omega^{n}}
\end{align*}
where $\Omega$ is the holomorphic symplectic form,
and $\lambda$ a positive constant.

\hfill

\remark
The form $q$ has signature $(3,b_2-3)$.
It is negative definite on primitive forms, and positive
definite on the space $\langle \Omega, \bar \Omega, \omega\rangle$
 where $\omega$ is a K\"ahler form, as seen from the
following formula
\begin{multline}\label{_BBF_via_Kahler_Equation_}
   \mu q(\eta_1,\eta_2)= \\
   \int_X \omega^{2n-2}\wedge \eta_1\wedge\eta_2  
   - \frac{2n-2}{(2n-1)^2}
   \frac{\int_X \omega^{2n-1}\wedge\eta_1 \cdot
   \int_X\omega^{2n-1}\wedge\eta_2}{\int_M\omega^{2n}}, \
   \  \mu>0
\end{multline}
(see e. g. \cite{_Verbitsky:cohomo_}, Theorem 6.1,
or \cite{_Huybrechts:lec_}, Corollary 23.9).

\hfill

%%%%%%%%%%%%%%%%%%%%%%%%%%%%%%%%%%%%%%%%%%%%%%%%%%%%
\definition
Let $[\eta]\in H^{1,1}(M)$ be a real (1,1)-class on
a hyperk\"ahler manifold $M$. We say that $[\eta]$
is {\bf parabolic} if $q([\eta],[\eta])=0$.
A line bundle $L$ is called {\bf parabolic} if $c_1(L)$
is parabolic.

%%%%%%%%%%%%%%%%%%%%%%%%%%%%%%%%%%%%%%%%%%%%%%%%%%%%%%%%%%%%
\subsection{The hyperk\"ahler SYZ conjecture}
%%%%%%%%%%%%%%%%%%%%%%%%%%%%%%%%%%%%%%%%%%%%%%%%%%%%%%%%%%%%

\theorem
(D. Matsushita, see \cite{_Matsushita:fibred_}).
Let $\pi:\; M \arrow X$ be a surjective holomorphic map
{}from a hyperk\"ahler manifold $M$ to $X$, with $0<\dim X < \dim M$.
Then $\dim X = 1/2 \dim M$, and the fibers of $\pi$ are 
holomorphic Lagrangian tori (this means that the symplectic
form vanishes on the fibers).\footnote{Here, as elsewhere,
we assume that the hyperk\"ahler manifold $M$ is simple.}

\hfill

\definition Such a map is called
{\bf a holomorphic Lagrangian fibration}.

\hfill

\remark The base of $\pi$ is conjectured to be
rational. J.-M. Hwang (\cite{_Hwang:base_}) 
proved that $X\cong \C P^n$, if it is smooth.
D. Matsushita (\cite{_Matsushita:CP^n_}) 
proved that it has the same rational cohomology
as $\C P^n$.

\hfill

\remark
 The base of $\pi$ has a natural flat connection
on the smooth locus of $\pi$. The combinatorics of this connection
can be used to determine the topology of $M$ 
(\cite{_Kontsevich-Soibelman:torus_},  
\cite{_Gross:SYZ_}),

\hfill

\definition
 Let $(M,\omega)$ be a Calabi-Yau manifold,
$\Omega$ the holomorphic volume form, and $Z\subset M$ a real 
analytic subvariety, Lagrangian with respect to
$\omega$. If $\Omega\restrict Z$ is proportional to
the Riemannian volume form, $Z$ is called {\bf special
Lagrangian} (SpLag).

\hfill

The special Lagrangian varieties were defined
in  \cite{_Harvey_Lawson:Calibrated_}
by Harvey and Lawson, who proved that they
minimize the Riemannian
volume in their cohomology class. This implies, in
particular, that their moduli are finite-dimensional.
In \cite{_McLean:SpLag_}, McLean studied deformations
of non-singular special Lagrangian
subvarieties and showed that they are unobstructed.

In \cite{_SYZ:MS_is_T_du_}, Strominger-Yau-Zaslow 
tried to explain the mirror symmetry phenomenon
using the special Lagrangian fibrations. They
conjectured that any Calabi-Yau manifold admits
a Lagrangian fibration with special Lagrangian fibers.
Taking its dual fibration, one obtains ``the mirror dual''
Calabi-Yau manifold.

\hfill

\definition A line bundle is called
{\bf semiample} if  $L^N$ is generated
by its holomorphic sections, which have 
no common zeros.

\hfill

\remark From semiampleness 
it obviously follows that $L$ is nef. Indeed,
let $\pi:\; M \arrow {\Bbb P}H^0(L^N)^*$ be the standard
map. Since the sections of $L$ have no common zeros, $\pi$ is 
holomorphic. Then $L\cong \pi^* \calo(1)$, and the
curvature of $L$ is a pullback of 
the K\"ahler form on $\C P^n$. However,
the converse is false: a nef bundle is not 
necessarily semiample 
(see e.g. \cite[Example 1.7]{_Demailly_Peternell_Schneider:nef_}).

\hfill

\remark Let $\pi:\; M \arrow X$ 
be a holomorphic Lagrangian fibration, and $\omega_X$
a K\"ahler class on $X$. Then $\eta:=\pi^*\omega_X$ is 
semiample and parabolic. The converse is also
true, by Matsushita's theorem:
if $L$ is semiample and parabolic, $L$ induces a Lagrangian
fibration. This is the only known 
source of non-trivial special Lagrangian fibrations.

\hfill

%%%%%%%%%%%%%%%%%%%%%%%%%%%%%%%%%%%%%%%%%%%
\conjecture\label{_SYZ_conj_Conjecture_}
(Hyperk\"ahler SYZ conjecture)
Let $L$ be a parabolic nef line bundle
on a hyperk\"ahler manifold. Then
$L$ is semiample.

\hfill

\remark
This conjecture was stated by many people
(Tyurin, Bogomolov, Hassett-Tschinkel,
Huybrechts, Sawon); please see 
\cite{_Sawon_} for an interesting
and historically important discussion,
and \cite{_Verbitsky:SYZ_} 
for details and references.

\hfill

\remark
The SYZ conjecture can be seen as
a hyperk\"ahler version of the ``abundance conjecture''
(see e.g. \cite{_Demailly_Peternell_Schneider:ps-eff_}, 
2.7.2). 

\hfill

%%%%%%%%%%%%%%%%%%%%%%%%%%%%%%%%%%%%%%%%%%%%%%%%
\claim\label{_examples_Claim_}
Let $M$ be an irreducible hyperk\"ahler manifold
in one of 4 known classes, that is, a deformation
of a Hilbert scheme of points on K3, a deformation
of generalized Kummer variety, or a deformation
of one of two examples by O'Grady. Then 
$M$ admits a deformation equipped with 
a holomorphic Lagrangian fibration.

\hfill

{\bf Proof:}
When $S$ is an elliptic K3 surface, the Hilbert scheme of points
$\Hilb^n(S)$ has an induced Lagrangian fibration with smooth fibers that
are products of $n$ elliptic curves:
$\Hilb^n(S) \rightarrow Sym^n({\Bbb P}^1) \simeq {\Bbb P}^n$.
Similarly, when $A$ is an elliptic Abelian surface, the generalized
Kummer variety $K^n(A)$ admits a Lagrangian fibration. 
Another construction gives
 Lagrangian fibrations on $\Hilb^n(S)$ and on $K^n(A)$ if
$S$ contains a smooth genus $n$ curve and if $A$ contains a smooth genus
$n+2$ curve (see examples 3.6 and 3.8 in \cite{_Sawon_}).

 O'Grady's examples are 
deformation equivalent to Lagrangian fibrations,
as follows from Corollary 1.1.10 in 
\cite{_Rapagnetta_}. 
\endproof

\section{Hyperk\"ahler geometry: preliminary results}

%%%%%%%%%%%%%%%%%%%%%%%%%%%%%%%%%%%%%%%%%%%%%%%%%%%%%%%%%%%%

%%%%%%%%%%%%%%%%%%%%%%%%%%%%%%%%%%%%%%%%%%%%%%%%%%%%%%%%%%%%%%%
\subsection{Teichm\"uller space and the moduli space}
%%%%%%%%%%%%%%%%%%%%%%%%%%%%%%%%%%%%%%%%%%%%%%%%%%%%%%%%%%%%%%%

Here we cite the relevant result from the deformation
theory of hyperk\"ahler manifolds.
We follow \cite{_V:Torelli_}.

Let $M$ be a hyperk\"ahler manifold (compact and simple, as usual), and 
$\Comp_0$ be the Fr\`echet manifold of all complex structures of
hyperk\"ahler type on~$M$. The quotient $\Teich:=\Comp_0/\Diff^0$ of
$\Comp_0$ by isotopies is a finite-dimensional complex analytic space
(\cite{_Catanese:moduli_}). This 
quotient is called the {\bf Teichm\"uller space} of $M$. When $M$ is a
complex curve, the quotient $\Comp_0/\Diff^0$ is the Teichm\"uller space
of this curve.

The mapping class group $\Gamma=\Diff^+/\Diff^0$ acts on $\Teich$ in the 
usual way, and its quotient $\Mod$ is the {\bf moduli space} of $M$.

As shown in \cite{_Huybrechts:finiteness_}, $\Teich$ has a finite
number of connected components. Take a connected component $\Teich^I$
containing a given complex structure~$I$, and let
$\Gamma^I\subset\Gamma$ be the set of elements of $\Gamma$ fixing this
component. Since $\Teich$ has only a finite number of connected
components, $\Gamma^I$ has finite index in $\Gamma$. On the other hand,
as shown in \cite{_V:Torelli_}, the image of the group $\Gamma$ is
commensurable to ${O}\big(H^2(M,\mathbb Z),q\big)$.

In \cite[Lemma 2.6]{_V:Torelli_} it was proved that any hyperk\"ahler
structure on a given simple hyperk\"ahler manifold is also simple.
Therefore, $H^{2,0}(M,I')=\mathbb C$ for all $I'\in\Comp$. This
trivial observation is a key to the following well-known definition.

\hfill

%%%%%%%%%%%%%%%%%%%%%%%%%%%%%%%%%%%%%%%%%%%%%%%%
\definition
Let $(M,I)$ be a hyperk\"ahler manifold, and $\Teich$ its Teichm\"uller
space. Consider a map $\Per:\Teich\arrow\mathbb PH^2(M,\mathbb C)$,
sending $J$ to the line $H^{2,0}(M,J)\in\mathbb PH^2(M,\mathbb C)$. It
is easy to see that $\Per$ maps $\Teich$ into the open subset of a
quadric, defined by
\begin{equation*}\label{_perspace_lines_Equation_}
\Perspace:=\big\{l\in\mathbb PH^2(M,\mathbb C)\ \big|\ q(l,l)=0,\
q(l,\bar l)>0\big\}.
\end{equation*}
The map $\Per:\Teich\arrow\Perspace$ is called the {\bf period map},
and the set $\Perspace$ the {\bf period space}.

\hfill

The following fundamental theorem is due to F. Bogomolov
\cite{_Bogomolov:defo_}.

\hfill

%%%%%%%%%%%%%%%%%%%%%%%%%%%%%%%%%%%%%%%%%%%%%%%%%%%%%%%%%%%%
\theorem
Let\/ $M$ be a simple hyperk\"ahler manifold, and $\Teich$ its
Teichm\"uller space. Then the period map\/ $\Per:\Teich\arrow\Perspace$
is a local diffeomorphism\/ {\rm(}that is, an etale map\/{\rm).}
Moreover, it is holomorphic.
\endproof

\hfill

%%%%%%%%%%%%%%%%%%%%%%%%%%%%%%%%%%%%%%%%%%%%%%%%
\remark
 Bogomolov's theorem implies that $\Teich$ is smooth. However, it
is not necessarily Hausdorff\/ {\rm(}and it is non-Hausdorff even in
the simplest examples\/{\rm).}
\endproof

%%%%%%%%%%%%%%%%%%%%%%%%%%%%%%%%%%%%%%%%%%%%%%%%%%%%%%%%%%%%%%%
\subsection{The polarized Teichm\"uller space}
%%%%%%%%%%%%%%%%%%%%%%%%%%%%%%%%%%%%%%%%%%%%%%%%%%%%%%%%%%%%%%%

In \cite[Corollary 2.6]{_Verbitsky:parabolic_}, the following
proposition was deduced from \cite{_Boucksom_}
and~\cite{_Demailly_Paun_}.

\hfill

%%%%%%%%%%%%%%%%%%%%%%%%%%%%%%%%%%%%%%%%%%%
\theorem\label{_Kahler_cone_Pic=1_Theorem_}
{\sl Let\/ $M$ be a simple hyperk\"ahler manifold, such that all
integer\/ $(1,1)$-classes satisfy\/ $q(\nu,\nu)\geq 0$. Then its
K\"ahler cone is one of the two connected components of the set\/
$K:=\big\{\nu\in H^{1,1}(M,\mathbb R)\ \big|\ q(\nu,\nu)>0\big\}$.}
\endproof

\hfill

%%%%%%%%%%%%%%%%%%%%%%%%%%%%%%%%%%
\remark\label{_Pic=1_Parabolic_Remark_}
{}From \ref{_Kahler_cone_Pic=1_Theorem_}
it follows that on a hyperk\"ahler manifold
with $\Pic(M)=\Z$, for any rational 
class $\eta\in H^{1,1}(M)$ with $q(\eta,\eta)\geq 0$, 
either $\eta$ or $-\eta$ is nef.

\hfill

%%%%%%%%%%%%%%%%%%%%%%%%%%%%%%%%%%%%%%%%%%%%
\remark\label{_closed_divisor_in_Teich_}
Consider an integer vector $\eta\in H^2(M)$ which is positive, that is,
satisfies $q(\eta, \eta)>0$. Denote by $\Teich^\eta$ the set of all
$I\in \Teich$ such that $\eta$ is of type $(1,1)$ on $(M, I)$. The
space $\Teich^\eta$ is a closed divisor in $\Teich$. Indeed, by
Bogomolov's theorem, the period map $\Per:\Teich\arrow\Perspace$ is
etale, but~the image of $\Teich^\eta$ is the set of all $l\in\Perspace$
which are orthogonal to $\eta$; this condition defines a closed divisor
$C_\eta$ in $\Perspace$, hence $\Teich^\eta=\Per^{-1}(C_\eta)$ is also
a closed divisor.

\hfill

%%%%%%%%%%%%%%%%%%%%%%%%%%%%%%%%%%%%%%%%%%%%%%%%
\remark\label{_Kaehler_class_eta_}
When $I\in\Teich^\eta$ is generic, Bogomolov's theorem implies that the
space of rational $(1,1)$-classes $H^{1,1}(M,\mathbb Q)$ is
one-dimensional and generated by $\eta$. This is seen from the
following argument.  Locally around a given point $I$ the period map
$\Teich^\eta\arrow\Perspace$ is surjective on the set $\Perspace^\eta$
of all $I\in\Perspace$ for which $\eta\in H^{1,1}(M,I)$. However, the
Hodge-Riemann relations give 
\begin{equation}\label{_Perspace^eta_ortho_Equation_}
\Perspace^\eta=\big\{l\in\Perspace\ \big|\ q(\eta,l)=0\big\}.
\end{equation}
Denote the set of such points of $\Teich^\eta$ by
$\Teich^\eta_{\text{gen}}$. It follows from
\ref{_Kahler_cone_Pic=1_Theorem_} that, for any
$I\in\Teich^\eta_{\text{gen}}$, either $\eta$ or $-\eta$ is a K\"ahler
class on $(M,I)$.

\hfill

%%%%%%%%%%%%%%%%%%%%%%%%%%%%%%%%%%%%%%%%%%%%%%%%%%%%%%%

Consider a connected component $\Teich^{\eta,I}$ of $\Teich^\eta$.
Changing the sign of $\eta$ if necessary, we may assume that $\eta$ is
K\"ahler on $(M,I)$. By Kodaira's theorem about stability of K\"ahler
classes, $\eta$ is K\"ahler in some neighbourhood
$U\subset\Teich^{\eta,I}$ of $I$. Therefore, the sets
$$V_+:=\big\{I\in\Teich^\eta_{\text{gen}}\ \big|\ \eta\text{ is
K\"ahler on }(M,I)\big\}$$
and
$$V_-:=\big\{I\in\Teich^\eta_{\text{gen}}\ \big|\ -\eta\text{ is
K\"ahler on }(M,I)\big\}$$ 
are open in $\Teich^\eta_{\text{gen}}$. It is easy to see that
$\Teich^\eta_{\text{gen}}$ is a complement to a union of countably many
divisors in $\Teich^\eta$ corresponding to the points
$I'\in\Teich^\eta$ with $\rk\Pic(M,I')>1$. Therefore, for any connected
open subset $U\subset\Teich^\eta$, the intersection
$U\cap\Teich^\eta_{\text{gen}}$ is connected. Since
$\Teich^\eta_{\text{gen}}$ is represented as a disjoint union of open
sets $V_+\sqcup V_-$, every connected component of
$\Teich^\eta_{\text{gen}}$ and of $\Teich^\eta$ is contained in $V_+$
or in $V_-$. We obtained the following corollary.

\hfill

%%%%%%%%%%%%%%%%%%%%%%%%%%%%%%%%%%%%%%
\corollary
{\sl Let\/ $\eta\in H^2(M)$ be a positive integer vector, $\Teich^\eta$
the corresponding divisor in the Teichm\"uller space, and\/
$\Teich^{\eta,I}$ a connected component of\/ $\Teich^\eta$ containing a
complex structure\/ $I$. Assume that\/ $\eta$ is K\"ahler on\/ $(M,I)$.
Then\/ $\eta$ is K\"ahler for all\/ $I'\in\Teich^{\eta,I}$ which
satisfy\/ $\rk H^{1,1}(M,\mathbb Q)=1$.}
\endproof

\hfill

We call the set $\Teich^\eta_{\text{pol}}$ of all $I\in\Teich^\eta$ for
which $\eta$ is K\"ahler the {\bf polarized Teichm\"uller space}, and
$\eta$ its {\bf polarization}. From the above arguments it is clear 
that the polarized Teichm\"uller space $\Teich^\eta_{\text{pol}}$ is 
open and dense in $\Teich^\eta$.

The quotient ${\cal M}_\eta$ of $\Teich^\eta_{\text{pol}}$ by the
subgroup of the mapping class group fixing $\eta$ is called the {\bf
moduli of polarized hyperk\"ahler manifolds}. It is known (due to the
general theory which goes back to Viehweg and Grothen\-dieck) that
${\cal M}_\eta$ is Hausdorff and quasiprojective (see
e.g.~\cite{_Viehweg:moduli_} and \cite{_GHK:moduli_HK_}).

\hfill

%%%%%%%%%%%%%%%%%%%%%%%%%%%%%%%%%%%%%%%%%%%%%%%%%%%%%%
\remark\label{_countably_quasiproj_div_Remark_}
We conclude that there are countably many quasiprojective divisors
${\cal M}_\eta$ immersed in the moduli space $\Mod$ of hyperk\"ahler
manifolds. Moreover, every algebraic complex structure belongs to one
of these divisors. However, these divisors need not to be closed.
Indeed, as proven in \cite{_Ananin_Verbitsky_}, each of 
${\cal M}_\eta$ is dense in $\Mod$.

\hfill

In \cite[Theorem 1.7]{_Ananin_Verbitsky_}, the following theorem was proven.

\hfill

%%%%%%%%%%%%%%%%%%%%%%%%%%%%%%%%%%%%%%%%%%%%%%%%%%%%%%%%%%%%%
\theorem\label{_dense_main_Theorem_}
{\sl Let\/ $M$ be a compact, simple hyperk\"ahler manifold, $\Teich^I$
a connected component of its Teichm\"uller space, and\/
$\Teich^I\stackrel\Psi\arrow\Teich^I/\Gamma^I=\Mod$ its projection to
the moduli space of complex structures. Consider a positive or negative 
vector\/ $\eta\in H^2(M,\mathbb Z)$, and let\/ $\Teich^{I,\eta}$ be the
corresponding connected component of the polarized Teichm\"uller space.
Assume that\/ $b_2(M)>3$. Then the image\/ $\Psi(\Teich^{I,\eta})$ is
dense in\/ $\Mod$.}

\hfill

The proof relies on a more general proposition about lattices. 

\hfill

\proposition \cite[Proposition 3.2, Remark 3.12]{_Ananin_Verbitsky_} 
\label{_lattices_Prop_} 
{\sl Let\/ $V$ be an\/ $\mathbb R$-vector space equipped with a 
non-degene\-rate symmetric form of signature\/ $(s_+,s_-)$ with\/ 
$s_+\ge3$ and\/ $s_-\ge1$. Consider a lattice\/ $L\subset V$. Let\/ 
$\Gamma$ be a subgroup of finite index in\/ $\O(L)$, and\/ $l\in L$. 
%a {\bf positive vector}, i.e., one which satisfies\/ 
%$\langle l,l\rangle>0$. 
Then\/ $\Gamma\cdot\Gr_{++}(l^\perp)$ is dense in $\Gr_{++}(V)$.}

\hfill

\remark
Since the proof of this statement is symmetric in $s_+$ and $s_-$, 
the same proposition is valid if we assume that $s_+\ge1$ and\/ $s_-\ge3$.

%%%%%%%%%%%%%%%%%%%%%%%%%%%%%%%%%%%%%%%%%%%%%%%%%%%%%%%%%%%%

\section{Main results}

%%%%%%%%%%%%%%%%%%%%%%%%%%%%%%%%%%%%%%%%%%%%%%%%%%%%%%%%%%%%

%%%%%%%%%%%%%%%%%%%%%%%%%%%%%%%%%%%%%%%%%%%%%%%%%%%%%%%%%%%%
\subsection{The moduli of manifolds with Lagrangian fibrations}
%%%%%%%%%%%%%%%%%%%%%%%%%%%%%%%%%%%%%%%%%%%%%%%%%%%%%%%%%%%%

Here we assume that $b_2(M) \geq 7$ as we need it for our proof of 
\ref{_dense_ope_Lagra_Theorem_}. The authors conjecture that the 
result is valid for smaller Betti numbers as well. 

\hfill

\definition
Let $L$ be a holomorphic line bundle
on a hyperk\"ahler manifold. We call
$L$ {\bf Lagrangian} if it is parabolic and semiample.

\hfill

\definition
Let $M$ be a hyperk\"ahler manifold.
Fix a parabolic class $L\in H^2(M,\Z)$.
We denote by $\Teich_L$ the Teichm\"uller space
of all complex structures $I$ of hyperk\"ahler type 
on $M$ such that $L$ is of type $(1,1)$ on $(M,I)$.
Clearly, $\Teich_L$ is a divisor in the 
whole Teichm\"uller space of $M$.
The space $\Teich_L$ is called
{\bf the Teichm\"uller space of 
hyperk\"ahler manifolds with parabolic class}.

\hfill

Matsushita proves the following openness result in 
\cite[Theorem 1.1]{_Matsushita:openness_}: 

\hfill

\theorem
Let $\Teich_L^\circ\subset\Teich_L$
be the set of all $I\in \Teich_L$ for which
$L$ is Lagrangian. Then $\Teich_L^\circ$
is open in $\Teich_L$. \endproof

\hfill

The main results of the present paper are the following 
two theorems.

\hfill

%%%%%%%%%%%%%%%%%%%%%%%%%%%%%%%%%%%%%%%%%%%%%%%%
\theorem\label{_dense_ope_Lagra_Theorem_}
The subspace $\Teich_L^\circ\subset\Teich_L$
is dense and open in $\Teich_L$.

\hfill

\begin{proof}
Fix a positive class $\eta \in H^2(M,\Z)$ and define $\Teich_{L, \eta}^\circ$ 
to be the open subset of  $\Teich_L^\circ$ for which $\eta$ is a polarization. 
Consider the projection $\Psi$ to the moduli space $\Mod$ as defined in 
\ref{_dense_main_Theorem_}. 
Since  ${\cal M}_\eta$ is quasiprojective (see \cite{_Viehweg:moduli_}), 
then $\Psi(\Teich_{L, \eta}^\circ)$ is Zariski open, and therefore dense in 
$\Psi(\Teich_{L, \eta})$. 

Fix a negative vector $L' \in H^2(M,\Z)$ such that the sublattice $<L, L'>$ 
is of rank 2. Notice that 
$\Psi(\Teich_L)= \{ l \in {\Bbb P}H^2(M,\Z) | q(l,l)=0, q(l, \bar l) >0, 
q(L,l)=0 \}/\Gamma_L$ and 
$\Psi(\Teich_{L, \eta})=  \{ l \in {\Bbb P}H^2(M,\Z) | q(l,l)=0, q(l, \bar l) >0, 
q(L,l)=0, q(\eta,l)=0 \} / \Gamma_{L, \eta}$. 
Applying \ref{_lattices_Prop_} to the 
quotient $H^2(M,\Z)/<L, L'>$, we see that $\Psi(\Teich_{L, L', \eta})$ is 
dense in $\Psi(\Teich_{L, L'})$ for any $L'$. Here we needed to assume 
$b_2 \geq 7$, because $H^2(M,\Z)$ is of signature $(3, b_2-3)$ and the 
quotient $H^2(M,\Z)/<L, L'>$ is of signature $(2, b_2-4)$. This satisfies 
the conditions of \ref{_lattices_Prop_} since $b_2 -4 \geq 3$. 

However, 
$\bigcup_{L'} \Psi(\Teich_{L, L'})$ is dense in $\Psi(\Teich_L)$, and 
$\bigcup_{L'} \Psi(\Teich_{L, L', \eta})$ is dense in $\Psi(\Teich_{L, \eta})$. 
Therefore, $\Psi(\Teich_{L, \eta})$ is dense in 
$\Psi(\Teich_L)$ and $\Teich_L^\circ$ is dense in  $\Teich_L$. 
\end{proof}

\hfill

%%%%%%%%%%%%%%%%%%%%%%%%%%%%%%%%%%%%%%%%%%%%%%%%%%%%%%%%%%%%
\remark\label{_dense_Lagra_Remark_}
Together with \ref{_lattices_Prop_}, 
\ref{_dense_ope_Lagra_Theorem_} implies that
the set of manifolds with Lagrangian fibrations is dense
within the deformation space of a hyperk\"ahler manifold $M$,
if $M$ admits a Lagrangian fibration.

\hfill

\theorem
Consider the action of the monodromy group
$\Gamma_I$ on $H^2(M,\Z)$, and let $S\subset H^2(M,\Z)$
be the set of all classes which are parabolic
and primitive. Then there are only
finitely many orbits of $\Gamma_I$ on $S$.

\hfill

\begin{proof} 
In the proof we use Nikulin's technique of discriminant-forms 
described in \cite{Nikulin:forms_}. 

Denote by $\Lambda$ the lattice $(H^2(M,\Z), q)$. It is a free $\Z$-module 
of finite rank together with a non-degenerate symmetric bilinear form $q$ 
with values in $\Z$. If $\{ e_i \}_{i \in I}$ is a basis of the lattice 
$\Lambda$, its {\it discriminant} is defined to be 
$\text{discr} (\Lambda) = \text{det} (e_i \cdot e_j)$.  
There is a canonical embedding  $\Lambda \hookrightarrow 
\Lambda^\ast = \text{Hom} (\Lambda, \mathbb Z)$ using the bilinear 
form of $\Lambda$. The {\it discriminant group} 
$A_\Lambda = \Lambda^\ast / \Lambda$ is a finite Abelian group of 
order $|\text{discr}(\Lambda)|$. One can extend the bilinear 
form to $\Lambda^\ast$ with values in $\Q$ and define the 
{\it discriminant-bilinear form} of the lattice $b_\Lambda : A_\Lambda 
\times A_\Lambda \rightarrow \Q / \Z$. It is a finite non-degenerate form. 
A subgroup $H \subset A_\Lambda$ is {\it isotropic} if $q_\Lambda|_H = 0$, 
where $q_\Lambda$ is the quadratic form corresponding to $b_\Lambda$. 
Given any subset $K \subset \Lambda$, its {\it orthogonal complement} is 
$K^\bot = \{v \in \Lambda | (v,K)=0  \}$. 

%Let $d = det(\Lambda)$ and $r = rk(\Lambda)$. 

An embedding of lattices $\Lambda_1 \hookrightarrow \Lambda_2$ is 
{\it primitive} if $\Lambda_2 / \Lambda_1$ is a free $\Z$-module.
Take a primitive vector $v \in \Lambda$ with $q(v)=0$. We can choose a 
vector $f \in \Lambda$ with minimal positive quadratic intersection 
$\alpha = q(v,f)$. Then $0 < \alpha \leq |\text{discr}(\Lambda)|$. 
It is implied by the following lemma: 

\hfill

\begin{lemma}
The minimal positive intersection $\alpha$ divides $\text{discr}(\Lambda)$. 
\end{lemma}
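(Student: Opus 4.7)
The plan is to exploit the image of the form $q(v,-): \Lambda \to \Z$ together with the primitivity of $v$ to locate an element of the discriminant group whose order is exactly $\alpha$; Lagrange's theorem then forces $\alpha$ to divide $|A_\Lambda|=|\text{discr}(\Lambda)|$.

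First I would observe that $q(v,-)$ is a $\Z$-linear map $\Lambda \arrow \Z$, so its image is a subgroup of $\Z$, necessarily of the form $\alpha\Z$ for $\alpha$ the minimal positive value attained. In particular $q(v,x)$ is divisible by $\alpha$ for every $x\in \Lambda$. This immediately means that the vector $v/\alpha \in \Lambda\otimes\Q$ pairs integrally with every element of $\Lambda$, i.e. $v/\alpha \in \Lambda^\ast$. Hence we get a well-defined class $[v/\alpha] \in A_\Lambda = \Lambda^\ast/\Lambda$.

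Next I would compute the order of $[v/\alpha]$ in $A_\Lambda$. Clearly $\alpha \cdot (v/\alpha) = v \in \Lambda$, so the order $k$ divides $\alpha$. Conversely, suppose $k\cdot (v/\alpha) = (k/\alpha)v$ lies in $\Lambda$. Because $v$ is primitive, it extends to a $\Z$-basis of $\Lambda$, so any rational multiple of $v$ lying in $\Lambda$ must be an integer multiple; thus $k/\alpha \in \Z$, which forces $\alpha \mid k$. Combined with $k\le \alpha$ and $k>0$ this gives $k=\alpha$, so $[v/\alpha]$ has order exactly $\alpha$ in $A_\Lambda$.

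Finally, since $A_\Lambda$ is a finite abelian group of order $|\text{discr}(\Lambda)|$, Lagrange's theorem yields that the order $\alpha$ of the element $[v/\alpha]$ divides $|A_\Lambda|=|\text{discr}(\Lambda)|$, as desired. The main (and essentially only) subtle point is the use of primitivity to prevent $[v/\alpha]$ from having smaller order; isotropy of $v$ plays no role in this particular step. The inequality $0<\alpha\leq |\text{discr}(\Lambda)|$ claimed before the lemma then follows immediately.
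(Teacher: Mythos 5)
Your proof is correct, but it takes a different route from the paper's. The paper argues directly on the Gram matrix: since $v$ is primitive it extends to a $\Z$-basis $\{v_1=v,v_2,\dots,v_n\}$, the ideal $\alpha\Z$ is generated by the values $q(v,v_i)$, so the first column of $[q(v_i,v_j)]$ is divisible by $\alpha$, and expanding the determinant gives $\alpha\mid\det[q(v_i,v_j)]=\text{discr}(\Lambda)$. You instead pass to the discriminant group: from $q(v,\Lambda)=\alpha\Z$ you get $v/\alpha\in\Lambda^\ast$, you use primitivity (again via extending $v$ to a basis) to show the class $[v/\alpha]\in A_\Lambda$ has order exactly $\alpha$, and you conclude by Lagrange's theorem since $|A_\Lambda|=|\text{discr}(\Lambda)|$. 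Both arguments are complete and use primitivity in essentially the same place (a non-primitive $v$ would break the paper's basis extension and would break your lower bound on the order of $[v/\alpha]$). The paper's version is more elementary, needing only a determinant expansion; yours is slightly stronger in output, since producing an explicit element of order $\alpha$ in $A_\Lambda$ shows that $\alpha$ divides the exponent of the discriminant group, not merely its order, and it fits naturally with the Nikulin discriminant-form machinery the rest of the proof relies on. The only point worth making explicit is that the image of $q(v,-)$ is nonzero (so that $\alpha$ exists), which follows from the non-degeneracy of $q$ already assumed in the setup.
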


\begin{proof}
Since $v$ is primitive, we can choose a free $\Z$-basis $\{ v_1=v, v_2, \dots, 
v_n \}$ of $\Lambda$, where $n = \text{rk}(\Lambda)$. If 
$\alpha= \text{min} \{q(v,f)|f \in \Z^n \}$, then $\alpha \Z$ is an ideal 
generated by $\{ q(v,v_i), i=1, \dots, n \}$. For every $i =1, \dots, n, ~~
q(v,v_i)=\alpha \cdot a_i$ for some $a_i \in \Z$. Thus the matrix 
$[q(v_j,v_i)]$ has first column divisible by $\alpha$. Then 
$\text{det}[q(v_j,v_i)]=\text{discr}(\Lambda)$ is divisible by $\alpha$. 
\end{proof}

\hfill

Let $K$ be the primitive sublattice of $\Lambda$ spanned by $v$ and $f$. 
The intersection matrix of $\text{Span}(v,f)$ has determinant 
$q(v,v)q(f,f)-q(v,f)^2=-\alpha^2$ which is bounded: 
$-|\text{discr}(\Lambda)|^2 \leq -\alpha^2 < 0$. Since $\rk(K)=2$, 
$K$ has at most four primitive isotropic vectors ($2\rk(K)=4$). 

An {\it overlattice} of $\Lambda$ is a lattice embedding 
$i: \Lambda \rightarrow \Lambda'$ with $\Lambda$ and $\Lambda'$ of the 
same rank, or equivalently, such that $H_{\Lambda'} = \Lambda' / \Lambda$ 
is a finite Abelian group. Note that we have the inclusions: 
$\Lambda \hookrightarrow \Lambda' \hookrightarrow \Lambda'^\ast 
\hookrightarrow \Lambda^\ast$. Therefore, $H_{\Lambda'} \subset \Lambda'^\ast 
/ \Lambda \subset \Lambda^\ast / \Lambda = A_\Lambda$. 

\hfill

\begin{proposition} \cite[Proposition 1.4.1]{Nikulin:forms_} 
\label{_Nikulin_1.4.1_}
The correspondence $\Lambda' \rightarrow H_{\Lambda'}$ determines a 
bijection between overlattices of $\Lambda$ and isotropic subgroups 
of $A_\Lambda$. Furthermore, 
$H_{\Lambda'}^\bot = \Lambda'^\ast / \Lambda$ 
and $H_{\Lambda'}^\bot / H_{\Lambda'} = A_{\Lambda'}$. 
\end{proposition}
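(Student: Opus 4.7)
The plan is to identify overlattices of $\Lambda$ with specific sublattices of $\Lambda^*$ and then read off both the bijection and the two identities from the single chain of inclusions $\Lambda \subset \Lambda' \subset \Lambda'^* \subset \Lambda^*$, which is the key organizing device throughout.

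For the forward direction, I would start by observing that any overlattice embedding $\Lambda \hookrightarrow \Lambda'$ automatically factors through $\Lambda^*$: any $v \in \Lambda'$ pairs integrally with all of $\Lambda \subset \Lambda'$ by integrality of the form on $\Lambda'$, so $v$ defines an element of $\Lambda^* = \Hom(\Lambda, \Z)$. Thus $H_{\Lambda'} := \Lambda'/\Lambda$ injects canonically into $A_\Lambda = \Lambda^*/\Lambda$. Integrality of the form on $\Lambda'$ applied to pairs $v, w \in \Lambda'$ gives $q(v,w) \in \Z$, which is precisely the statement that $b_\Lambda(\bar v, \bar w) = 0 \in \Q/\Z$ for their images. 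Hence $H_{\Lambda'}$ is isotropic.

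For the reverse direction, given an isotropic subgroup $H \subset A_\Lambda$, I would define $\Lambda'_H := \pi^{-1}(H)$ for the projection $\pi : \Lambda^* \twoheadrightarrow A_\Lambda$. The isotropy condition $q_\Lambda|_H = 0$ is exactly the statement that the $\Q$-valued extension of the form restricts to a $\Z$-valued form on $\Lambda'_H$, so $\Lambda'_H$ is an integral lattice containing $\Lambda$ with the same rank, i.e., an overlattice. The assignments $\Lambda' \mapsto H_{\Lambda'}$ and $H \mapsto \Lambda'_H$ are visibly mutual inverses, establishing the bijection. For the first identity, the inclusion $\Lambda \subset \Lambda'$ dualizes to $\Lambda'^* \subset \Lambda^*$ (restriction of an integer-valued functional), and one also has $\Lambda \subset \Lambda'^*$ because $\Lambda$ pairs integrally with $\Lambda' \subset \Lambda^*$. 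An element $v + \Lambda \in A_\Lambda$ lies in $\Lambda'^*/\Lambda$ exactly when $q(v,w) \in \Z$ for every $w \in \Lambda'$, which—modulo $\Z$, and since $v$ already pairs integrally with $\Lambda$—is exactly orthogonality to $H_{\Lambda'}$ under $b_\Lambda$; hence $H_{\Lambda'}^\perp = \Lambda'^*/\Lambda$. Applying the third isomorphism theorem to $\Lambda \subset \Lambda' \subset \Lambda'^*$ then gives
$$H_{\Lambda'}^\perp / H_{\Lambda'} = (\Lambda'^*/\Lambda)/(\Lambda'/\Lambda) \cong \Lambda'^*/\Lambda' = A_{\Lambda'}.$$

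The main difficulty is purely organizational: one must keep the four-term chain $\Lambda \subset \Lambda' \subset \Lambda'^* \subset \Lambda^*$ straight and recognize that \emph{isotropy of $H_{\Lambda'}$ in $A_\Lambda$} and \emph{integrality of the form on $\Lambda'$} are literally the same statement, translated between $\Q/\Z$ and $\Z$. No deeper input is needed beyond the non-degeneracy of $b_\Lambda$, which ensures the orthogonal complement inside the finite group $A_\Lambda$ behaves as a genuine perp and has the expected order.
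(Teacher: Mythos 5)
The paper offers no proof of this proposition at all---it is quoted verbatim from Nikulin with a citation---so there is nothing internal to compare against; your argument is the standard one from Nikulin's paper and is essentially correct. The identification of $\Lambda'/\Lambda$ with a subgroup of $A_\Lambda$ via integrality of the pairing, the inverse construction $H\mapsto\pi^{-1}(H)$, the computation $H_{\Lambda'}^\perp=\Lambda'^\ast/\Lambda$, and the third isomorphism theorem for the final identity are all right. One caveat deserves to be made explicit: you silently replace the stated isotropy condition $q_\Lambda|_H=0$ (vanishing of the discriminant \emph{quadratic} form) by vanishing of the discriminant \emph{bilinear} form $b_\Lambda$ on $H\times H$, asserting these are ``literally the same statement.'' They are equivalent in Nikulin's actual setting, where $\Lambda$ is even and $q_\Lambda$ takes values in $\Q/2\Z$, so that $b_\Lambda(x,y)=\frac12\bigl(q_\Lambda(x+y)-q_\Lambda(x)-q_\Lambda(y)\bigr)$ is recoverable from $q_\Lambda$; but for a merely integral lattice with $q_\Lambda(x)=b_\Lambda(x,x)\in\Q/\Z$ (which is all the paper's definitions guarantee), $q_\Lambda|_H=0$ only forces $2b_\Lambda|_{H\times H}=0$, and then $\pi^{-1}(H)$ need not carry a $\Z$-valued form. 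The hypothesis that actually drives your reverse direction is $b_\Lambda|_{H\times H}=0$; state that this is the sense of ``isotropic'' you are using (or add the evenness hypothesis under which the two notions coincide). Everything else---the forward direction and both displayed identities---goes through exactly as you wrote it.
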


\hfill

Let $L=K^\bot$ be the orthogonal complement of $K$ in $\Lambda$. Then 
$K \oplus L \subset \Lambda \subset K^\ast \oplus L^\ast$. Since 
$\det(L)$ is bounded, in view of \ref{_Nikulin_1.4.1_}, 
there are finitely many ways of expressing $\Lambda$ as an overlattice of 
$\Lambda_K \doteq K \oplus K^\bot$ because $A_\Lambda$ is finite of order 
$|\text{discr}(\Lambda)|$ and there are finitely many isotropic subgroups. 
% Since for each $K$ there are finitely 
% many primitive isotropic vectors, we conclude that the set $S$ is finite. 

\hfill

Define the lattices $\Lambda$ and $\Lambda'$ to be {\it stably 
equivalent} if there exists a lattice $M$ such that 
$\Lambda \oplus M \simeq \Lambda' \oplus M$. The following proposition 
is a reformulation of Theorem 1.1 in Chapter 9 of Cassels' book 
\cite{_Cassels_}. 

\hfill

\begin{proposition} \label{_finit_stable_equiv_}
% Let $\Lambda$ be an indefinite lattice of rank at least $5$. Then 
There exist only a finite number of lattices stably equivalent to $\Lambda$. 
\end{proposition}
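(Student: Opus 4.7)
The plan is to show that stable equivalence is a coarsening of the genus classification of integral lattices, and then to apply the classical finiteness of the number of isomorphism classes in a genus.

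First, I would extract the invariants preserved by stable equivalence. From $\Lambda\oplus M\cong\Lambda'\oplus M$ one reads off $\rk\Lambda=\rk\Lambda'$; that the signatures of $\Lambda$ and $\Lambda'$ coincide (since the signature of the sum is additive and the $M$-contribution cancels); and that $|\text{discr}(\Lambda)|=|\text{discr}(\Lambda')|$ (taking determinants of Gram matrices).

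Second, I would promote this to an isomorphism of discriminant-forms. The isomorphism above induces, via the direct-sum formula for discriminant groups, an isometry
\[
(A_\Lambda, q_\Lambda)\oplus(A_M, q_M)\;\cong\;(A_{\Lambda'}, q_{\Lambda'})\oplus(A_M, q_M)
\]
of finite non-degenerate quadratic forms. A Witt-style cancellation result for such forms (due to Wall, and used systematically by Nikulin) then yields $(A_\Lambda, q_\Lambda)\cong(A_{\Lambda'}, q_{\Lambda'})$.

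Third, by Nikulin's theorem that the genus of an indefinite non-degenerate integral lattice is determined by its signature together with its discriminant-form, $\Lambda$ and $\Lambda'$ lie in the same genus. Finally, I would invoke the classical Minkowski--Siegel finiteness theorem: any genus of integral quadratic lattices contains only finitely many isomorphism classes. This is exactly the content of the Cassels theorem being cited, and it closes the argument.

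The main obstacle is the cancellation step for finite quadratic forms. Witt cancellation can fail at the prime $2$, so some care is needed; the standard remedy is Wall's analysis of $2$-adic Jordan decompositions of discriminant-forms. An alternative route, which avoids the discriminant-form entirely, is to show directly that stable equivalence implies isomorphism over every completion: over $\R$ this is just agreement of signatures, over $\Z_p$ for odd $p$ it is the usual Witt cancellation, and over $\Z_2$ one argues by Jordan decomposition. In either approach, the conclusion is that stably equivalent lattices lie in a single genus, whence the finiteness follows from the Minkowski--Siegel theorem.
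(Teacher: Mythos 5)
Your argument is essentially correct, but it takes a genuinely different and more elaborate route than the paper, which in fact offers no written proof at all: it simply observes that the proposition is a reformulation of Theorem 1.1 in Chapter 9 of Cassels' book, i.e.\ of the reduction-theoretic fact that there are only finitely many isomorphism classes of nondegenerate integral lattices of given rank and given determinant. Since $\Lambda\oplus M\cong\Lambda'\oplus M$ forces $\rk\Lambda=\rk\Lambda'$ and $\operatorname{discr}(\Lambda)=\operatorname{discr}(\Lambda')$ (the determinant of an orthogonal sum is the product of the determinants, and $\operatorname{discr}(M)\neq 0$), finiteness is immediate --- no genus theory, no discriminant forms, no cancellation. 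Your route is a genuine strengthening: it shows that stably equivalent lattices lie in a single \emph{genus}, not merely in a finite set, which is more structural information. The price is exactly the obstacle you flag: cancellation of the finite quadratic form $(A_M,q_M)$, equivalently $2$-adic cancellation of lattices, is a real theorem (Wall, or O'Meara's local cancellation in your completion picture) and is simply not needed for the bare finiteness claim. Two small cautions: Nikulin's statement that signature plus discriminant form determines the genus is formulated for \emph{even} lattices (and does not require indefiniteness), so for a possibly odd $\Lambda$ your alternative argument via isomorphism over all completions is the safer formulation; and the Minkowski--Siegel finiteness of classes in a genus is itself usually deduced from the same Hermite reduction that underlies Cassels' theorem, so the two proofs ultimately rest on the same kernel of finiteness.
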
 \endproof

%\begin{proof}
%Since $\rk(\Lambda) \geq 5$, there exists an isotropic vector $v \in \Lambda$ 
%by Meyer's theorem (see \cite{_Serre_}). We construct the lattice 
%$K = \text{Span}(v,f)$ as above, where $f\in \Lambda$ has minimal positive 
%intersection with $v$. From the vectors $f$ with minimal positive 
%intersection $\alpha = q(v,f)$ with the fixed primitive vector 
%$v \in \Lambda$ we can choose one with $|q(f)| < 2 \alpha$. Indeed, we can 
%consider the set $\{ f + cv | c \in \Bbb Z \}$. Every element in this set has 
%quadratic intersection $\alpha = q(v, f+cv)$ with $v$ and we can take the 
%element with minimal absolute value of its self-square 
%$q(f+cv)=q(f)+2c\alpha$. Without loss of generality, we again denote this 
%element by $f$. Therefore, the matrix of $K = \text{Span}(v,f)$ has bounded 
%entries. There exist finitely many classes of $K$ and finitely many lattices 
%$\Lambda_K = K \oplus K^\bot$ up to isomorphism. 

%If there are infinitely many lattices stably equivalent to $\Lambda$, this 
%implies that there will be infinitely many lattices stably equivalent to 
%$\Lambda_K = K \oplus K^\bot$. Since the isomorphism class of $K$ is 
%determined up to finitely many choices, it means that $K^\bot$ is stably 
%equivalent to infinitely many lattices of the form $K_1^\bot$ - a contradiction. 
%\end{proof}

\hfill

If we assume that there are infinitely many orbits of $\Gamma_I$, this would 
imply that there exist infinitely many non-isomorphic pairs of lattices 
$(K, K^\bot)$. Then for infinitely many of them $K^\bot$ would be stably 
equivalent to $K_1^\bot$ for another $K_1$ since there are only finitely 
many choices for $K$. This contradicts \ref{_finit_stable_equiv_} and 
the result follows. 
\end{proof}

\hfill

%%%%%%%%%%%%%%%%%%%%%%%%%%%%%%%%%%%%%%%%%%%%%%%%
\corollary\label{_finitely_many_divi_Corollary_}
For any hyperk\"ahler manifold, there are only
finitely many orbits of $\Gamma_I$ on the set
of all divisors $\Teich_L$ with a parabolic class.

\hfill

Combining \ref{_finitely_many_divi_Corollary_}
and \ref{_dense_ope_Lagra_Theorem_}, we obtain the
following result.

\hfill

%%%%%%%%%%%%%%%%%%%%%%%%%%%%%%%%%%%%%%%%%%%%%%%%
\corollary
Let $M$ be a hyperk\"ahler manifold. Then there are
only finitely many deformation types of 
Lagrangian fibrations $(M,I)\arrow S$, for all
complex structures on $M$.

\hfill

\begin{proof}
By \ref{_Kaehler_class_eta_} we can assume that 
$H^{1,1}(M,\mathbb Q)$ is one-dimensional and generated by a parabolic 
class $L$. Since either $L$ or $-L$ is nef, we can assume $L$ to be nef. 
{}From \ref{_dense_ope_Lagra_Theorem_} it follows that for each pair 
$(M,L)$ there exists a unique deformation type of a fibration structure. 
We conclude finiteness of the deformation types of Lagrangian fibrations 
since there are finitely many orbits of $\Gamma_I$ on the set $\Teich_L$. 
\end{proof}

%%%%%%%%%%%%%%%%%%%%%%%%%%%%%%%%%%%%%%%%%%%%%%%%%%%%%%%%%%%%
\subsection{Kobayashi hyperbolicity in hyperk\"ahler geometry}
%%%%%%%%%%%%%%%%%%%%%%%%%%%%%%%%%%%%%%%%%%%%%%%%%%%%%%%%%%%%

\definition
A compact manifold $M$ is called {\bf Kobayashi hyperbolic}
if any holomorphic map $\C \arrow M$ is constant.

For an introduction to the hyperbolic geometry,
please see \cite{_Lang:hyperbolic_}.

As an application of \ref{_dense_ope_Lagra_Theorem_},
we obtain the following result.

\hfill

\theorem
Let $M$ be an irreducible holomorphic symplectic manifold
in one of 4 known classes known, that is, a deformation
of a Hilbert scheme of points on K3, a deformation
of generalized Kummer variety, or a deformation
of one of two examples by O'Grady. Then $M$ is not
Kobayashi hyperbolic.

\hfill

{\bf Proof:} From Brody's lemma \cite{_Lang:hyperbolic_}
it follows that a limit of non-hyperbolic manifolds
is again non-hyperbolic. Therefore, it would suffice
to find a dense set of non-hyperbolic manifolds within
the moduli space. A hyperk\"ahler manifold
admitting a holomorphic Lagrangian fibration
is non-hyperbolic, because it contains
abelian subvarieties. As follows from \ref{_examples_Claim_}, 
all known types of hyperk\"ahler manifolds
admit a deformation which has a Lagrangian 
fibration. By \ref{_dense_Lagra_Remark_},
such deformations are dense in the moduli.
\endproof

\hfill

It is conjectured that all hyperk\"ahler and
Calabi-Yau manifolds are not hyperbolic.
The strongest result about non-hyperbolicity of
hyperk\"ahler manifolds so far was due to F. Campana,
who proved in \cite{_Campana:twistor_} that 
any twistor family of a hyperk\"ahler manifold
has at least one fiber which is non-hyperbolic.

\hfill

\hfill

\noindent{\bf Acknowledgments.} 
We are grateful to V. Nikulin for many conversations 
on bilinear forms, Valery Gritsenko for a helpful
email, and to F. Bogomolov for his remarks. 
The first named author thanks the Laboratory of Algebraic 
Geometry and its Applications for their kind invitation 
and hospitality during her stay in Moscow in June 2012. 
We are thankful to the Simons Center for Geometry and Physics 
and to John Morgan for inviting the second named author and 
for making our work more enjoyable. Many thanks to
Fr\'ed\'eric Campana, Simone Diverio, Klaus Hulek and 
Daniel Huybrechts for
a discussion of non-hyperbolicity of hyperk\"ahler 
manifolds.

\hfill

{\small

\hfill

\noindent {\sc Ljudmila Kamenova\\
Department of Mathematics, 3-115 \\
Stony Brook University \\
Stony Brook, NY 11794-3651, USA \\
 }

\noindent {\sc Misha Verbitsky\\
Laboratory of Algebraic Geometry, \\
Faculty of Mathematics, National Research University HSE,\\
7 Vavilova Str. Moscow, Russia
 }
}

\end{document}